\newtheorem{theorem}{Theorem}
\newtheorem{proposition}[theorem]{Proposition}
\newtheorem{lemma}[theorem]{Lemma}
\newtheorem*{theorem*}{Theorem}
\theoremstyle{definition}
\newtheorem{definition}[theorem]{Definition}
\newcommand{\spe}[1]{\mathbf{#1}}
\DeclareMathOperator{\Int}{Int}
\DeclareMathOperator{\lk}{link}
\DeclareMathOperator{\dcone}{dcone}
\title[Cohen-Macaulay Hopf Monoids]{On Cohen-Macaulay Hopf monoids in species}
\author[J. A. White]{Jacob A. White\thanks{\href{mailto:jacob.white@utrgv.edu}{jacob.white@utrgv.edu}}\addressmark{1}}
\address{\addressmark{1}School of Mathematical and Statistical Sciences, University of Texas Rio Grande Valley, Edinburg, TX, USA}
\abstract{We study Cohen-Macaulay Hopf monoids in the category of species. The goal is to apply techniques from topological combinatorics to the study of polynomial invariants arising from combinatorial Hopf algebras. Given a polynomial invariant arising from a linearized Hopf monoid, we show that under certain conditions it is the Hilbert polynomial of a relative simplicial complex. If the Hopf monoid is Cohen-Macaulay, we give necessary and sufficient conditions for the corresponding relative simplicial complex to be relatively Cohen-Macaulay, which implies that the polynomial has a nonnegative $h$-vector. We apply our results to the weak and strong chromatic polynomials of acyclic mixed graphs, and the order polynomial of a double poset.}
\keywords{Cohen-Macaulay complexes, Hopf algebras, Combinatorial Species}
\begin{document}

\maketitle

\section{Introduction}
Suppose we have a sequence $A_0, A_1, \ldots$ of finite sets and, for each $a \in A_i,$ we have a polynomial $p(a,x)$ with the property that $p(a,k) \in \mathbb{N}$ for all $k \in \mathbb{N}$. Recall that the $W$-transform of a polynomial $p(a,x)$ is given by 
\[W(a, x) = (1-x)^{d+1} \sum_{k \geq 0} p(a, k) x^k \] 
where $d$ is the degree of $p(a_i, k)$. The $W$-transform is a polynomial of degree $d$ with integer coefficients. Write $W(a, x) = \sum_{k=0}^d h_k x^k$, and refer to $(h_0, \ldots, h_d)$ as the $h$-vector of $p(a, x)$. When is the $h$-vector nonnegative? When this happens, we say that $p(a, k)$ is $h$-positive. There is a similar concept for quasisymmetric functions called $F$-positivity, where $F$ refers to the basis of fundamental quasisymmetric functions.

A classical example of the above problem is where $A_n$ is the set of graphs with vertex set $[n] = \{1, \ldots, n \}$, and $p(G, k)$ is the chromatic polynomial. In this case, the $h$-vector is always nonnegative, as was shown by Brenti \cite{brenti}.

We will give sufficient conditions for nonnegativity of the $h$-vector for the weak and strong chromatic polynomials of an acyclic mixed graph, and for the order polynomial of a double poset. Our conditions also imply $F$-positivity for corresponding quasisymmetric functions.

Our first observation is that all of these polynomial invariants arise from the theory of Hopf monoids. If we have collections of labeled objects, with rules for how to combine the combinatorial objects and decompose them, and the rules are `well-behaved', then we have a linearized Hopf monoid. We review the definition in Section \ref{sec:combhopfmon}. If we have some distinguished Hopf submonoid $\spe{S}$ satisfying certain conditions, then there is a polynomial invariant $\chi^{\spe{S}}_{\spe{H}}(\spe{h}, k)$ which counts decompositions of an $\spe{H}$-structure into $\spe{S}$-structures. For example, graphs form a linearized Hopf monoid $\spe{G}$, and if $\spe{S}$ is the submonoid of edgeless graphs, then $\chi^{\spe{S}}_{\spe{G}}(\spe{g},k)$ counts decompositions into edgeless graphs, and thus is the chromatic polynomial. We refer the polynomial $\chi^{\spe{S}}_{\spe{H}}(\spe{h}, k)$ as the characteristic polynomial of $\spe{h}$ with respect to $\spe{S}$.

We recently \cite{white-2} gave conditions for when characteristic polynomials are Hilbert polynomials. Given a linearized Hopf monoid $\spe{H}$, a \emph{geometric Hopf submonoid} $\spe{S}$, and an $\spe{H}$-structure $\spe{h}$, we constructed a relative simplicial complex $(\Sigma(\spe{h}), \Gamma_{\spe{S}}(\spe{h}))$ such that $\chi^{\spe{S}}_{\spe{H}}(\spe{h},k+1)$ is the Hilbert polynomial of the double cone of $(\Sigma(\spe{h}), \Gamma_{\spe{S}}(\spe{h}))$. The complex $\Gamma_{\spe{S}}(\spe{h})$ is a generalization of the coloring complex of a graph \cite{steingrimsson}.

Thus, if our sets $A_i$ form a linearized Hopf monoid and our polynomial invariant is a characteristic polynomial, then we are studying Hilbert polynomials of relative simplicial complexes. If a relative simplicial complex is relatively Cohen-Macaulay, then the Hilbert polynomial is $h$-positive. In our case, the corresponding quasisymmetric functions are also $F$-positive.
We arrive at the problem we study in this paper:
find nice combinatorial conditions on $\spe{H}$, $\spe{S}$ and $\spe{h}$ to ensure that $(\Sigma(\spe{h}), \Gamma_{\spe{S}}(\spe{h}))$ is relatively Cohen-Macaulay, to obtain a new tool for showing $h$-positivity. Our main assumption is that $\spe{H}$ is Cohen-Macaulay, which means that $\Sigma(\spe{h})$ is Cohen-Macaulay for every $\spe{h}$. This tends to be true for most examples we know of.

\begin{theorem}
Let $\spe{H}$ be a linearized Hopf monoid and let $\spe{S}$ be a geometric Hopf submonoid. Let $N$ be a finite set, and let $\spe{h} \in \spe{H}_N$. Then $\spe{h}$ is relatively Cohen-Macaulay with respect to $\spe{S}$ if and only if the following condition is satisfied:
\begin{itemize}
    \item For all $S \subset T \subseteq N$, if $|T \setminus S| \geq 2$, then $\Gamma_{\spe{S}}(\spe{h}|_T /S)$ is a connected pure simplicial complex of dimension $\dim \Sigma(\spe{h}|_T/S) - 1$.
\end{itemize}
\label{thm:main}
\end{theorem}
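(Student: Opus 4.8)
The plan is to reduce relative Cohen--Macaulayness of $(\Sigma(\spe{h}), \Gamma_{\spe{S}}(\spe{h}))$ to a homological statement about links via the relative analogue of Reisner's criterion: the pair is relatively Cohen--Macaulay if and only if, for every face $F$ of $\Sigma(\spe{h})$, the relative reduced homology $\tilde{H}_i\bigl(\lk_{\Sigma(\spe{h})} F,\ \lk_{\Gamma_{\spe{S}}(\spe{h})} F\bigr)$ vanishes for all $i$ below the (relative) dimension of that link. For the ``only if'' direction I will also use that links of a relatively Cohen--Macaulay pair are again relatively Cohen--Macaulay.

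The heart of the proof is a structural lemma identifying these links. From the construction of $\Sigma$ and $\Gamma_{\spe{S}}$ in \cite{white-2}, the restriction and contraction operations of the linearized Hopf monoid, and the hypothesis that $\spe{S}$ is geometric, I would show that for each face $F$ of $\Sigma(\spe{h})$ there are sets $S \subseteq T \subseteq N$, determined by $F$, together with a join decomposition
\[ \bigl(\lk_{\Sigma(\spe{h})} F,\ \lk_{\Gamma_{\spe{S}}(\spe{h})} F\bigr) \;\cong\; \bigl(\Sigma(\spe{h}|_T/S),\ \Gamma_{\spe{S}}(\spe{h}|_T/S)\bigr) * \Delta_F, \]
where $\Delta_F$ is a join of boundary complexes of simplices (in particular a sphere, hence Cohen--Macaulay), and where $|T \setminus S|$ runs over every value from $0$ to $|N|$ as $F$ ranges over $\Sigma(\spe{h})$. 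The delicate point is the dimension bookkeeping: one must verify that the codimension-one relation $\dim \Gamma_{\spe{S}}(\spe{h}'|_{T'}/S') = \dim \Sigma(\spe{h}'|_{T'}/S') - 1$ is compatible with the join, so that the normalization ``$\dim \Sigma(\spe{h}|_T/S) - 1$'' in the theorem is exactly the right one at every level.

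Granting the lemma, the rest is a formal bootstrap. Since $\spe{H}$ is Cohen--Macaulay, $\Sigma(\spe{h}|_T/S)$ is Cohen--Macaulay for all $S \subseteq T$; combining this and the K\"unneth formula for joins with the long exact sequence of the pair $(\Sigma(\spe{h}|_T/S), \Gamma_{\spe{S}}(\spe{h}|_T/S))$ strips off the factor $\Delta_F$ and the part of the homology coming from $\Sigma(\spe{h}|_T/S)$, which is acyclic below its top dimension. The relative-homology vanishing demanded by the relative Reisner criterion, for all links simultaneously, thereby collapses to the single family of conditions
\[ \tilde{H}_j\bigl(\Gamma_{\spe{S}}(\spe{h}|_T/S)\bigr) = 0 \quad\text{for } j < \dim \Sigma(\spe{h}|_T/S) - 1, \qquad S \subseteq T \subseteq N, \]
with the instances $|T \setminus S| \le 1$ being vacuous (the complexes are void or irrelevant). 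Finally, by induction on $|T \setminus S|$ and Reisner's criterion applied inside $\Gamma_{\spe{S}}(\spe{h}|_T/S)$ --- whose links are, by the structural lemma, joins of spheres with smaller complexes $\Gamma_{\spe{S}}(\spe{h}|_{T'}/S')$ --- this family is equivalent to requiring that every $\Gamma_{\spe{S}}(\spe{h}|_T/S)$ with $|T \setminus S| \ge 2$ be connected (the $\tilde{H}_0$ condition, the only one not already supplied by the inductive hypothesis) and pure of dimension $\dim \Sigma(\spe{h}|_T/S) - 1$ (which pins down the degree). Reading the resulting chain of equivalences in both directions yields the theorem.

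The step I expect to be the main obstacle is the structural lemma on links: extracting the precise join decomposition from the combinatorics of $\Sigma$, $\Gamma_{\spe{S}}$, and the geometric Hopf submonoid, and carrying the dimension count so that the codimension-one condition propagates to all links. A smaller nuisance is dispatching the degenerate cases $|T \setminus S| \le 1$ and reconciling conventions for the dimension and purity of a relative complex (and confirming the argument is field-independent, or else fixing a coefficient field).
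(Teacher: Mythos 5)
Your overall reduction matches the first half of the paper's argument: links of faces in $(\Sigma(\spe{h}),\Gamma_{\spe{S}}(\spe{h}))$ decompose into joins coming from minors, and a K\"unneth/long-exact-sequence argument (together with Cohen--Macaulayness of each $\Sigma(\spe{h}|_T/S)$) collapses the relative Reisner criterion to the single family of conditions $\widetilde{H}_j(\Gamma_{\spe{S}}(\spe{h}|_T/S))=0$ for $j<\dim\Sigma(\spe{h}|_T/S)-1$. (One quibble: the link of a chain $S_1\subset\cdots\subset S_k$ is a join of $k+1$ relative pairs, one for each minor $\spe{h}|_{S_{i+1}}/S_i$, not a single minor pair joined with boundaries of simplices; but this does not change the shape of the reduction.)

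The genuine gap is in your last step. You claim that, by induction on $|T\setminus S|$ and Reisner's criterion applied \emph{inside} $\Gamma_{\spe{S}}(\spe{h}|_T/S)$, the vanishing of $\widetilde{H}_j(\Gamma_{\spe{S}}(\spe{h}|_T/S))$ for all $j<\dim\Sigma-1$ follows from purity, connectedness, and the inductive hypothesis on proper links. This is false as a matter of topology: knowing that every link of a \emph{nonempty} face has homology concentrated in top dimension, plus connectedness of the whole complex, controls only $\widetilde{H}_0$ and says nothing about $\widetilde{H}_j$ for $0<j<\dim$. A triangulated torus is connected, pure, and all of its proper links are spheres of the correct dimension, yet $\widetilde{H}_1\neq 0$; it can moreover be embedded as a pure connected codimension-one subcomplex of a Cohen--Macaulay complex (e.g.\ a skeleton of a simplex). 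So the hypotheses you invoke at this step are simply not sufficient, and no local argument can close the gap. The paper gets the global vanishing by an entirely different mechanism: it abstracts $\Gamma_{\spe{S}}(\spe{h})$ as $\Gamma(\mathcal{F},M)$ for an order filter $\mathcal{F}$ of intervals of a set system $M$ with $\Sigma(M)$ Cohen--Macaulay, and proves $\widetilde{H}_i(\Gamma(\mathcal{F},M))=0$ for $i<\dim\Sigma(M)-1$ by induction on the number of length-two intervals \emph{not} in $\mathcal{F}$, with base case the codimension-one skeleton and inductive step a Mayer--Vietoris sequence against an auxiliary pair $(\mathcal{F}',M')$. That order-filter structure is exactly the global information your proposal never uses, and it is what rules out torus-like behavior. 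To repair your proof you would need to supply this lemma (or an equivalent global argument); purity, connectedness, and induction on minors alone will not do it.
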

Instead of trying to determine which polynomials are $h$-positive, as was done with the chromatic polynomial \cite{brenti}, or trying to determine which subcomplexes $\Gamma_{\spe{S}}(\spe{h})$ are shellable from first principles as was done for the coloring complex in \cite{hultman}, we merely have to determine when $\Gamma_{\spe{S}}(\spe{h})$ is pure, connected, and has the right dimension. 

We apply our techniques to three new examples: the weak and strong chromatic polynomials of acyclic mixed graphs, and the order polynomial of a double poset. In all three examples, we obtain necessary and sufficient \emph{combinatorial} conditions for determining whether or not the relative simplicial complexes are Cohen-Macaulay. Our approach also implies known results for coloring complexes of hypergraphs \cite{hypergraph}, and can be used to prove $h$-positivity results there. 

The paper is organized as follows. In Section \ref{sec:combhopfmon}, we review linearized Hopf monoids, along with the special example of acyclic mixed graphs. We also define the characteristic polynomials and quasisymmetric functions, along with the strong chromatic polynomial. In Section \ref{sec:simplicial}, we define our simplicial complexes, and the definition of Cohen-Macaulay Hopf monoid. In that section we also focus on the example of acyclic mixed graphs, and show that strong chromatic polynomial of an acyclic mixed graph is $h$-positive. Then we apply our theorem to the weak chromatic polynomial and to double posets. In Section \ref{sec:proof}, we sketch a proof of our theorem.

\section{linearized Hopf monoids}
\label{sec:combhopfmon}

In this section, we study Hopf monoids in the category of linear species. The motivation is that many examples of combinatorial Hopf algebras come from linearized Hopf monoids in set species. Moreover, the coproduct of a basis element is a multiplicity-free sum of tensors of basis elements. 
A \emph{set species} is a functor $\spe{F}: Set \to Set$ from the category of finite sets with bijections, to the category of finite sets with bijections. 
A \emph{linear species} is a functor $\spe{F}: Set \to Vec$ from the category of finite sets with bijections to the category of finite dimensional vector spaces over a field $\mathbb{K}$ and linear transformations.
Given a set species $\spe{F}$, there is an associated linear species $\mathbb{K}\spe{F}$ called the \emph{linearization}: we define $(\mathbb{K}\spe{F})_N$ to be the vector space with basis $\spe{F}_N$. We refer to $\spe{f}$ as an $\spe{F}$-structure if there exists a finite set $N$ such that $\spe{f} \in \spe{F}_N$.

A \emph{Hopf monoid} is a Hopf monoid object in the category of linear species \cite{aguiar-mahajan-1}. We give some of the structural definition and axioms related to associativity and compatability. There are also unit, counit morphisms and antipode axioms. We refer to \cite{aguiar-mahajan-2,aguiar-ardila} for more details.
For every pair of disjoint finite sets $M, N$, we have linear transformations $\mu_{M,N}: \spe{H}_M \otimes \spe{H}_N \to \spe{H}_{M \sqcup N}$ and $\Delta_{M,N}: \spe{H}_{M \sqcup N} \to \spe{H}_M \otimes \spe{H}_N$. We refer to $\mu$ as \emph{multiplication} and $\Delta$ as \emph{comultiplication}.
We require several axioms, including:
\begin{enumerate}
    \item $\mu_{L, M \sqcup N} \circ 1_L \otimes \mu_{M, N} = \mu_{L \sqcup M, N} \circ \mu_{L,M} \otimes 1_N$
    \item $1_L \otimes \Delta_{M, N} \circ \Delta_{L, M \sqcup N} = \Delta_{L,M} \otimes 1_N \circ \Delta_{L \sqcup M, N}$.
    \item $\Delta_{A \sqcup C, B \sqcup D} \circ \mu_{A \sqcup B, C \sqcup D} = \mu_{A,C} \otimes \mu_{B,D} \circ 1_A \otimes \tau_{B,C} \otimes 1_D \circ \Delta_{A,B} \otimes \Delta_{C,D}$.
\end{enumerate}
Note that these are equalities of functions, and $1$ is the identity map.
We let $\Delta_{L, M, N} = 1_L \otimes \Delta_{M,N} \circ \Delta_{L,M \sqcup N}$, and $\spe{x} \cdot \spe{y} = \mu_{M,N}(\spe{x} \otimes \spe{y}) $.

Let $\spe{H}$ be a species, and suppose that the linearization $\mathbb{K}\spe{H}$ is a Hopf monoid.
We say that $\spe{H}$ is a (weakly) \emph{linearized} Hopf monoid if $\mathbb{K}\spe{H}$ is a Hopf monoid, and:
\begin{enumerate}
    \item for every pair of disjoint finite sets $N, M$, and every $\spe{x} \in \spe{H}_M, \spe{y} \in \spe{H}_N$, we have $\spe{x} \cdot \spe{y} \in \spe{H}_{M \sqcup N}$.
    \item for every pair of disjoint finite sets $M, N$ and every $\spe{x} \in \spe{H}_{M \sqcup N}$, if $\Delta_{M,N}(\spe{x}) \neq 0$ then there exists $\spe{x}|_M \in \spe{H}_M$ and $\spe{x}/M \in \spe{H}_N$ with $\Delta_{M,N}(\spe{x}) = \spe{x}|_M \otimes \spe{x}/M$.
\end{enumerate}
In other words, the monoid structure is also linearized, and the coproduct $\Delta_{M,N}$ sends basis elements to simple tensors of basis elements or $0$. The notion of linearized Hopf monoids have been introduced before \cite{aguiar-mahajan-2}. Our definition is weaker as we allow $\Delta_{M,N}(\spe{h}) = 0$. We refer to $\spe{h}|_T/S$ as a \emph{minor} of $\spe{h}$ for any $S \subseteq T \subseteq N$. 
Most Hopf monoids that have been studied are Hadamard products of linearized Hopf monoids and their duals. Examples of linearized Hopf monoids include the Hopf monoid of graphs, posets, matroids, hypergraphs, set partitions, linear orders, and generalized permutohedra. In fact, almost every Hopf monoid studied in \cite{aguiar-ardila} is a linearized Hopf monoid. We will often describe the multiplication and comuliplication operations on $\spe{H}$, and leave it to the reader to see that the induced maps on $\mathbb{K}\spe{H}$ turn $\spe{H}$ into a linearized Hopf monoid.

Our primary example of a linearized Hopf monoid will be the linearized Hopf monoid of acyclic mixed graphs.
An \emph{acyclic mixed graph} on $N$ has both directed edges and undirected edges, without any directed cycles. We require that our acyclic mixed graphs are simple: there is at most one edge between any two vertices.

Now we describe a Hopf monoid structure on the species of acyclic mixed graphs.
Let $\spe{M}_N$ be the set of acyclic mixed graphs on $N$. Given $\spe{g} \in \spe{M}_M$ and $\spe{h} \in \spe{M}_N$, where $M$ and $N$ are disjoint sets, we let $\spe{g} \cdot \spe{h}$ be the disjoint union. This defines our multiplication operation.
Now we define the comultiplication operation. Let $\spe{g} \in \spe{M}_{M \sqcup N}$. If there exists a directed edge of the form $(m,n)$ where $m \in M$ and $n \in N$, then $\Delta_{M,N}(\spe{g}) = 0$. Otherwise, we let $\Delta_{M,N}(\spe{g}) = \spe{g}|_M \otimes \spe{g}|_N$, where $\spe{g}|_M$ is the induced subgraph on $M$. For example, given the acyclic mixed graph $\spe{g}$, on the left in Figure \ref{fig:graph}, we see that $\Delta_{\{a,b,d \}, \{c\}}(G) = \spe{g}_1 \otimes \spe{g}_2$, where $\spe{g}_1$ is the acyclic mixed graph in the middle of Figure \ref{fig:graph}, and $\spe{g}_2$ is the vertex $c$ by itself. On the other hand, $\Delta_{\{d\}, \{a,b,c\}}(\spe{g}) = 0$.
With our multiplication and comultiplication operations, $\spe{M}$ is a linearized Hopf monoid.
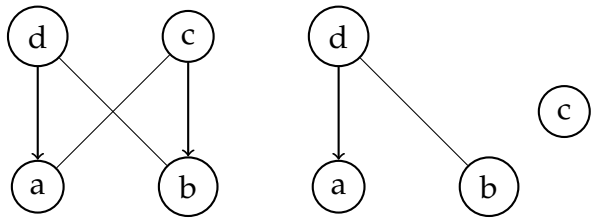
\begin{figure}
\begin{center}
\begin{tikzpicture}
\node[thick, draw=black,  fill = white, circle] (a) at (-2,-1) {a};
\node[thick, draw=black,  fill = white, circle] (b) at (0,-1) {b};
\node[thick, draw=black,  fill = white, circle] (c) at (0,1) {c};
\node[thick,  draw=black, fill = white, circle] (d) at (-2,1) {d};

\draw[-, line cap = round] (a) -- (c);
\draw[-, line cap = round] (b) -- (d);
\draw[->, thick, line cap = round] (c) -- (b);
\draw[->, thick, line cap = round] (d) -- (a);

\node[thick, draw=black,  fill = white, circle] (a2) at (2,-1) {a};
\node[thick, draw=black,  fill = white, circle] (b2) at (4,-1) {b};
\node[thick, draw=black,  fill = white, circle] (c2) at (5,0) {c};
\node[thick,  draw=black, fill = white, circle] (d2) at (2,1) {d};

\draw[-, line cap = round] (b2) -- (d2);

\draw[->, thick, line cap = round] (d2) -- (a2);
\end{tikzpicture}
\end{center}
\caption{an acyclic mixed graph, and some of its induced subgraphs.}
\label{fig:graph}
\end{figure}

A \emph{geometric Hopf submonoid} $\spe{S}$ of $\spe{H}$ is a species such  that:
\begin{enumerate}
    \item $\spe{S}_N \subseteq \spe{H}_N$ for all $N$
    \item For $\spe{x} \in \spe{H}_M$ and $\spe{y} \in \spe{H}_N$, we have $\spe{x}\cdot \spe{y} \in \spe{S}_{M \sqcup N}$ if and only if $\spe{x}  \in \spe{S}_{M}$ and $\spe{y} \in \spe{S}_{N}$.
    \item For $\spe{x} \in \spe{S}_{M \sqcup N}$, if $\Delta_{M,N}(\spe{x}) \neq 0$, then $\spe{x}|_M \in \spe{S}_M$ and $\spe{x}/M \in \spe{S}_N$.
    \end{enumerate}
The motivation for the term \emph{geometric} will be revealed in the next section.
For the Hopf monoid of acyclic mixed graphs, we can let $\spe{S}_N = \{D_N \}$ where $D_N$ is the edgeless graph. Then $\spe{S}$ is a geometric Hopf submonoid.

For this paper, we focus on quasisymmetric function and polynomial invariants associated to a \emph{geometric Hopf submonoid}. The fact that our constructions form quasisymmetric functions relies on the existence of certain characters, the Fock functors of Aguiar and Mahajan \cite{aguiar-mahajan-1}, and the theory of combinatorial Hopf algebras \cite{aguiar-bergeron-sottile}. However, we will limit ourselves to the definitions of the invariants for this extended abstract. 

Let $N$ be a finite set, and let $\spe{h} \in \spe{H}_N$. Let $f: N \to \mathbb{N}$. We say that $f$ is $\spe{S}$-proper if $\spe{h}|_{f^{-1}([i])} / f^{-1}([i-1])$ exists and is an $\spe{S}$-structure for all $i$.
For example, consider an acyclic mixed graph $\spe{g}$, and let $\spe{S}$ be the geometric Hopf submonoid of edgeless graphs.
In order for a function $f$ to be $\spe{S}$-proper we see that, for every $i$, $f^{-1}([i])$ must not contain a vertex $u$ that is part of a directed edge $(u,v)$ with $f(v) > i$. If $f(v) = j > i$, then $\spe{g}|_{f^{-1}([j])} / f^{-1}([j-1])$ does not exist. Moreover, the induced subgraph on $f^{-1}(i)$ has no edges. This implies that, for every directed edge $(u,v)$, we must have $f(u) < f(v)$. Also, for every undirected edge $uv$, we must have $f(u) \neq f(v)$. Hence $f$ is a \emph{strong coloring} of $\spe{g}$ as defined in \cite{beck-et-al}. 

Let $ \{x_i: i \in \mathbb{N} \}$ be a set of commuting indeterminates. 
\begin{definition}
Let $\spe{H}$ be a linearized Hopf monoid and $\spe{S}$ be a Hopf submonoid. Fix a finite set $N$, and $\spe{h} \in \spe{H}_N$. 
Then  the \emph{characteristic quasisymmetric function} with respect to $\spe{S}$ is given by \[\Psi_{\spe{H}}^{\spe{S}}(\spe{h}) = \sum_{f}  \prod_{n \in N} x_{f(n)} \]
where the sum is over $\spe{S}$-proper functions.
For $k \in \mathbb{N}$, we define $\chi_{\spe{H}}^{\spe{S}}(\spe{h}, k)$ to be the number of $\spe{S}$-proper functions with codomain $[k]$. This is the \emph{characteristic polynomial} (in the variable $k$) with respect to $\spe{S}$.
\end{definition}

For example, $\chi^{\spe{S}}_{\spe{M}}(\spe{g},k)$ counts the number of strong colorings, and is called the \emph{strong chromatic polynomial}, and $\Psi^{\spe{S}}_{\spe{M}}(\spe{g})$ is the strong chromatic quasisymmetric function. When $\spe{g}$ has no directed edges, we recover the usual chromatic polynomial, and Stanley's chromatic symmetric function \cite{stanley-coloring-1}. We will use $\bar{\chi}(\spe{g},k)$ to denote the strong chromatic polynomial. The strong chromatic polynomial of the graph $\spe{g}$ on the left in Figure \ref{fig:graph} has $h$-vector $(0,1,2,3)$.

For the reader who is more familiar with combinatorial Hopf algebras, we can give more explanation about the connection between $\Psi_{\spe{H}}^{\spe{S}}$ and the theory of combinatorial Hopf algebras. Namely, given the linearized Hopf monoid $\spe{H}$, we know that the symmetric group $S_n$ acts on $\spe{H}_{[n]}$. We let $H_n$ be the vector space whose basis elements are the orbits of this action. let $H = \bigoplus_{n \geq 0} H_n$. Then it is known that $H$ is a graded, connected Hopf algebra. Given an $\spe{H}$-structure $\spe{h}$, we define $\varphi(\spe{h}) = 1$ if $\spe{h}$ is an $\spe{S}$-structure, and $0$ otherwise. Then $\varphi: H \to \mathbb{K}$ is a character. It follows that there is a unique Hopf algebra homomorphism $\Psi_{\varphi}:H \to QSym$. Then we have $\Psi_{\spe{H}}^{\spe{S}}(\spe{h}) = \Psi_{\varphi}(\spe{h})$. In our case, it turns out this more algebraic definition can be rephrased purely as a generating function counting certain types of functions \cite{white-2}.

\section{Cohen-Macaulay Hopf Monoids}
\label{sec:simplicial}
We define Cohen-Macaulay complexes and Cohen-Macaualay Hopf monoids. The former is a well-known concept, while the latter is new. We assume the reader is familiar with terminology regarding simplicial complexes such as the \emph{link} of a face $\sigma$, which we denote $\lk_{\Sigma}(\sigma)$, reduced homology $\widetilde{H}_i(\Sigma)$, and relative homology.
A simplicial complex $\Sigma$ is \emph{Cohen-Macaulay} if $\dim \widetilde{H}_i(\lk_{\Sigma}(\sigma)) = 0$ for $i < \dim \lk_{\Sigma}(\sigma)$ for every $\sigma \in \Sigma$.

Let $\spe{H}$ be a linearized Hopf monoid. Let $N$ be a finite set, and consider $\spe{h} \in \spe{H}_N$. We define the simplicial complex $\Sigma(\spe{h})$. The vertex set $V(\spe{h})$ consists of all nonempty $S \subset N$ for which $\Delta_{S, N \setminus S}(\spe{h}) \neq 0$. We partially order $V(\spe{h})$ by inclusion, and let $\Sigma(\spe{h})$ be the order complex, which consists of chains $ \emptyset \subset S_1 \subseteq S_2 \subseteq \cdots \subseteq S_k \subset N$ such that $\Delta_{S_i, N \setminus S_i}(h) \neq 0$ for all $i$. 

As an example, let $\spe{M}$ be the Hopf monoid of acyclic mixed graphs. Let $\spe{g}$ be the acyclic mixed graph on the left in Figure \ref{fig:graph}. Then $\Sigma(\spe{g})$ is depicted in Figure \ref{fig:graphcomplex}.

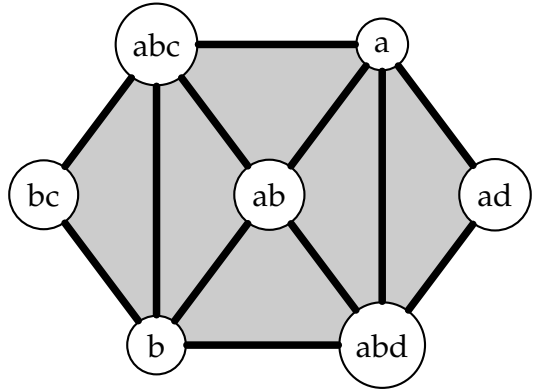
\begin{figure}
\begin{center}
\begin{tikzpicture}[z={(-.5,-.28)}]

\draw[fill=black, opacity=.2, rounded corners] (-3,0) -- (-1.5,-2) -- (1.5,-2) -- (3,0) -- (1.5, 2) -- (-1.5,2) -- cycle;

\node[thick,  draw=black, fill = white, circle] (bc) at (-3,0) {bc};
\node[thick,  draw=black, fill = white, circle] (b) at (-1.5,-2) {b};
\node[thick,  draw=black, fill = white, circle] (abd) at (1.5,-2) {abd};
\node[thick,  draw=black, fill = white, circle] (ad) at (3,0) {ad};
\node[thick,  draw=black, fill = white, circle] (a) at (1.5,2) {a};
\node[thick,  draw=black, fill = white, circle] (abc) at (-1.5,2) {abc};
\node[thick,  draw=black, fill = white, circle] (ab) at (0,0) {ab};
\draw[-, line width = 1mm, line cap = round] (bc) -- (abc) -- (a) -- (ad) -- (abd) -- (b) -- (bc);
\draw[-, line width = 1mm, line cap = round] (b) -- (abc) -- (ab) -- (abd) -- (a) -- (ab) -- (b);

\end{tikzpicture}
\end{center}
\caption{An example of a simplicial complex coming from an $\spe{M}$-structure.}
\label{fig:graphcomplex}
\end{figure}
The relevance of $\Sigma(\spe{h})$ is that the algebraic structure of $\Sigma(\spe{h})$ encodes combinatorial information about $\spe{h}$. Namely, $\chi^{\spe{H}}_{\spe{H}}(\spe{h},k+1)$ is the Hilbert polynomial of the double cone over $\Sigma(\spe{h})$ \cite{white-2}.

We say that $\spe{H}$ is \emph{Cohen-Macaulay} if $\Sigma(\spe{h})$ is Cohen-Macaulay for every $\spe{H}$-structure $\spe{h}$.
\begin{theorem}
Let $\spe{H}$ be a linearized Hopf monoid. Then $\spe{H}$ is Cohen-Macaulay if and only if for every finite set $N$, every $h \in \spe{H}_N$, and every $i < |N|$, we have $\dim \widetilde{H}_i(\Sigma(\spe{h})) = 0$.
\end{theorem}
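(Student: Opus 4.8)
The plan is to reduce the Cohen--Macaulay property of $\spe{H}$---a condition on the links of all faces of all the complexes $\Sigma(\spe{h})$---first to a property of the complexes $\Sigma(\spe{h})$ themselves, and then to their reduced homology. The structural input that makes this possible is that a link in $\Sigma(\spe{h})$ is a join of complexes of the same type attached to minors of $\spe{h}$. Explicitly, for $\spe{h}\in\spe{H}_N$ and a face $\sigma=(S_1\subsetneq\cdots\subsetneq S_j)$ of $\Sigma(\spe{h})$, writing $S_0=\emptyset$ and $S_{j+1}=N$,
\[
\lk_{\Sigma(\spe{h})}(\sigma)\;\cong\;\Sigma(\spe{h}|_{S_1}/S_0)\ast\Sigma(\spe{h}|_{S_2}/S_1)\ast\cdots\ast\Sigma(\spe{h}|_{S_{j+1}}/S_j).
\]
Since the link of a chain in an order complex is the join of the order complexes of the corresponding open intervals, this amounts to a poset isomorphism between the open interval $(S_i,S_{i+1})$ in $V(\spe{h})\cup\{\emptyset,N\}$ and the proper part of the analogous poset for the minor $\spe{h}|_{S_{i+1}}/S_i$: a set $T$ with $S_i\subsetneq T\subsetneq S_{i+1}$ has $\Delta_{T,N\setminus T}(\spe{h})\neq 0$ precisely when $T\setminus S_i$ indexes a vertex of $\Sigma(\spe{h}|_{S_{i+1}}/S_i)$. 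This is the step where coassociativity (axiom (2)) and the linearization hypotheses enter; the factors with $|S_{i+1}\setminus S_i|=1$ are the $(-1)$-dimensional complex $\{\emptyset\}$, the unit for the join, and may be discarded.

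The second ingredient is the Künneth formula for joins over $\mathbb{K}$, $\widetilde{H}_n(\Gamma\ast\Gamma')\cong\bigoplus_{p+q=n-1}\widetilde{H}_p(\Gamma)\otimes\widetilde{H}_q(\Gamma')$, together with $\dim(\Gamma\ast\Gamma')=\dim\Gamma+\dim\Gamma'+1$; so a join of complexes each with trivial reduced homology has trivial reduced homology. For ``$\Leftarrow$'': the hypothesis, applied to all $\spe{H}$-structures and (via $\dim\Sigma(\spe{h}')\le|N'|-2$) to all relevant degrees, says every $\Sigma(\spe{h}')$ has trivial reduced homology; then for any $\spe{h}$ and any face $\sigma$ the decomposition above exhibits $\lk_{\Sigma(\spe{h})}(\sigma)$ as a join of complexes of minors of $\spe{h}$, hence with trivial reduced homology, so $\widetilde{H}_i(\lk_{\Sigma(\spe{h})}(\sigma))=0$ for $i<\dim\lk_{\Sigma(\spe{h})}(\sigma)$. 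Thus $\Sigma(\spe{h})$ is Cohen--Macaulay for every $\spe{h}$, i.e.\ $\spe{H}$ is Cohen--Macaulay.

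The delicate direction is ``$\Rightarrow$''. Here the case $\sigma=\emptyset$ of the Cohen--Macaulay condition gives $\widetilde{H}_i(\Sigma(\spe{h}))=0$ for $i<d$, where $d=\dim\Sigma(\spe{h})\le|N|-2$, so the only remaining task is to kill the top reduced homology $\widetilde{H}_d(\Sigma(\spe{h}))$. I would argue by induction on $|N|$: for a nonempty face $\sigma$, the decomposition writes $\lk_{\Sigma(\spe{h})}(\sigma)$ as a join of complexes of minors on strictly smaller ground sets, which are acyclic by the inductive hypothesis, so every nonempty-face link of $\Sigma(\spe{h})$ is acyclic; since a Cohen--Macaulay complex is pure, $\Sigma(\spe{h})$ is pure of dimension $d$, so when $d\ge 1$ every ridge has a single-vertex link and hence lies in exactly one facet, which forces $\partial_d$ to be injective (a $d$-cycle $\sum_F a_FF$ has $\pm a_F$ as the coefficient of any ridge $\rho\subset F$ in its boundary) and therefore $\widetilde{H}_d(\Sigma(\spe{h}))=0$.

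The main obstacle is the structural lemma: extracting the poset isomorphism $(S_i,S_{i+1})\cong\bar P(\spe{h}|_{S_{i+1}}/S_i)$ from coassociativity and the linearization axioms, verifying that the minors that occur are exactly those indexed by faces of $\Sigma(\spe{h})$, and handling the degenerate $\{\emptyset\}$ factors correctly; and, within ``$\Rightarrow$'', the argument that $\spe{H}$ being Cohen--Macaulay is strong enough to force the top reduced homology of each $\Sigma(\spe{h})$ to vanish. Granting these, both implications reduce to the Künneth formula and an elementary computation of $\ker\partial_d$.
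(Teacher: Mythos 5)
The extended abstract never actually proves this theorem, but its final section (Section~\ref{sec:proof}) indicates that the intended mechanism is exactly your structural lemma: links of faces of $\Sigma(\spe{h})$ decompose as joins of the complexes $\Sigma(\spe{h}|_{S_{i+1}}/S_i)$ of minors, and one concludes via the K\"unneth formula for joins. Your ``$\Leftarrow$'' direction is therefore the right argument and matches the paper's strategy (modulo the technical verification of the poset isomorphism, which you correctly flag).

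The problem is your ``$\Rightarrow$'' direction: the assertion you are trying to prove there is false as literally stated, and your argument fails at exactly the point where the counterexample lives. Since $\dim\Sigma(\spe{h})\le|N|-2$, the condition ``$\widetilde{H}_i(\Sigma(\spe{h}))=0$ for all $i<|N|$'' says every $\Sigma(\spe{h})$ is $\mathbb{K}$-acyclic, \emph{including in its top degree}. Now take $\spe{H}=\spe{M}$ and let $\spe{g}$ be any graph on $N$ with no directed edges, $|N|\ge 2$: then $\Delta_{S,N\setminus S}(\spe{g})\neq 0$ for every $S$, so $\Sigma(\spe{g})$ is the barycentric subdivision of the boundary of the $(|N|-1)$-simplex, a sphere of dimension $|N|-2$. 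This is Cohen--Macaulay (and $\spe{M}$ is a Cohen--Macaulay Hopf monoid by the paper's own Proposition), yet $\widetilde{H}_{|N|-2}(\Sigma(\spe{g}))\neq 0$ with $|N|-2<|N|$. Your induction collapses precisely here: the claim that every ridge lies in exactly one facet requires the links of ridges to be single points, which requires the minors' complexes to be acyclic in their top degree; already for a two-element ground set the complex can be two isolated points, which is not connected, so the induction has no valid base, and your ridge computation is explicitly restricted to $d\ge 1$ without addressing $d=0$, where the claim is false. The statement the paper actually uses (its proof of the Proposition for $\spe{M}$ only verifies ``trivial homology below top dimension'') is the version with $i<\dim\Sigma(\spe{h})$ in place of $i<|N|$. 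Under that reading ``$\Rightarrow$'' is the trivial direction (take $\sigma=\emptyset$ in Reisner's criterion), and your join/K\"unneth computation gives ``$\Leftarrow$'', since $\widetilde{H}_n(A\ast B)\cong\bigoplus_{p+q=n-1}\widetilde{H}_p(A)\otimes\widetilde{H}_q(B)$ together with $\dim(A\ast B)=\dim A+\dim B+1$ shows that ``homology concentrated in top degree'' is preserved under joins. So keep your $\Leftarrow$ argument, discard the ridge argument, and replace the target of $\Rightarrow$ by vanishing of homology strictly below the top dimension.
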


Note that the advantage of the theorem is that we do not have to consider homology groups of links. For many examples of linearized Hopf monoids appearing in the literature, all the complexes $\Sigma(\spe{h})$ are contractible, and our theorem immediately implies that $\spe{H}$ is thus Cohen-Macaulay.

\begin{proposition}
The species $\spe{M}$ of acyclic mixed graphs is a Cohen-Macaulay Hopf monoid.
\end{proposition}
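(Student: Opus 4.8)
The plan is to invoke the preceding theorem, which reduces the problem to a purely topological one: it suffices to show that for every finite set $N$ and every acyclic mixed graph $\spe{g} \in \spe{M}_N$, the complex $\Sigma(\spe{g})$ is pure of dimension $|N|-2$ with reduced homology vanishing below that dimension. In fact I would identify $\Sigma(\spe{g})$ up to homotopy outright.

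The first step is to notice that $\Sigma(\spe{g})$ ignores the undirected edges of $\spe{g}$ entirely: by definition $\Delta_{S, N \setminus S}(\spe{g}) \neq 0$ precisely when $\spe{g}$ has no directed edge $(m,n)$ with $m \in S$ and $n \in N \setminus S$, which is a condition about the directed part alone. Since $\spe{g}$ has no directed cycle, declaring $u \preceq v$ whenever there is a directed path from $u$ to $v$ (and $u \preceq u$) defines a partial order; call the resulting poset $P = P(\spe{g})$. A short induction along directed paths then shows that $\Delta_{S, N\setminus S}(\spe{g}) \neq 0$ if and only if $S$ is a filter of $P$. Hence the vertex poset $V(\spe{g})$, ordered by inclusion, is exactly the poset of proper nonempty filters of $P$; passing to complements identifies this, as an abstract poset, with the poset of proper nonempty order ideals of $P$, i.e. with the proper part $\overline{J(P)}$ of the distributive lattice $J(P)$ of order ideals of $P$. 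As order complexes do not see the direction of the order, $\Sigma(\spe{g})$ is isomorphic to the order complex of $\overline{J(P)}$.

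The last step is standard poset topology. The distributive lattice $J(P)$ is graded of rank $|P| = |N|$, so every maximal chain of $\overline{J(P)}$ has $|N|-1$ elements and $\Sigma(\spe{g})$ is pure of dimension $|N|-2$. Moreover the order complex of the proper part of any distributive lattice is shellable --- for example through the familiar EL-labeling of $J(P)$ that labels each covering relation by the element it adjoins, read off along a fixed linear extension of $P$ --- and a pure shellable complex of dimension $d$ is homotopy equivalent to a wedge of $d$-spheres. Consequently $\widetilde{H}_i(\Sigma(\spe{g})) = 0$ for all $i$ below the top dimension $|N|-2$, which is exactly the homological hypothesis of the preceding theorem, so $\spe{M}$ is Cohen-Macaulay.

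I expect the only genuinely new step to be the middle one, identifying $V(\spe{g})$ with the proper part of a distributive lattice; once that is in place, everything is off the shelf. Two small points need care: one must work with the transitive closure of the directed edges of $\spe{g}$, not the edges themselves, so that $P$ is truly a poset and the characterization of filters is clean; and the cases $|N| \leq 1$, where $\Sigma(\spe{g})$ is empty, should be dispatched separately and are immediate.
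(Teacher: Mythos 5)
Your reduction to the underlying poset is exactly the paper's first move: the paper likewise defines the partial order generated by the directed edges (there called $\spe{p}(\spe{g})$) and observes $\Sigma(\spe{g}) = \Sigma(\spe{p}(\spe{g}))$, so that the undirected edges play no role. Where you genuinely diverge is in the topological input. The paper cites the fact that $\Sigma(\spe{p})$ is a triangulation of the poset polyhedron and concludes contractibility; you instead identify the vertex poset $V(\spe{g})$ with the proper part $\overline{J(P)}$ of the distributive lattice of order ideals and invoke EL-shellability, obtaining a wedge of $(|N|-2)$-spheres. Both arguments deliver the conclusion actually used, namely $\widetilde{H}_i(\Sigma(\spe{g})) = 0$ for $i$ strictly below $\dim\Sigma(\spe{g})$ --- which is how the paper's own proof reads the preceding theorem (the literal bound ``$i < |N|$'' in that theorem's statement cannot be intended, since the edgeless graph on $N$ already yields the barycentric subdivision of $\partial\Delta^{|N|-1}$, a sphere with nonzero top homology). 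Your version is in fact the more careful of the two: ``contractible for any partial order'' fails precisely in the antichain case just mentioned, whereas ``shellable, hence a wedge of top-dimensional spheres'' is exactly right and additionally gives purity of $\Sigma(\spe{g})$ for free. What the paper's route buys instead is the geometric picture of a polyhedral triangulation, which ties into the Ehrhart/Hilbert-polynomial interpretation of the characteristic polynomial used elsewhere in the paper; your route is purely combinatorial and self-contained, resting only on standard poset topology of distributive lattices.
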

\begin{proof}
 For an acyclic mixed graph $\spe{g}$, we define a partial order $\spe{p}(\spe{g})$ such that $\Sigma(\spe{g}) = \Sigma(\spe{p}(\spe{g}))$. Given nodes $m,n \in N$, we say $m \leq n$ in $\spe{p}$ if there exists a directed path (using only directed edges) from $n$ to $m$ in $\spe{g}$. This defines a partial order $\spe{p}(\spe{g})$, with $\Sigma(\spe{p}(\spe{g})) = \Sigma(\spe{g})$. It is known that $\Sigma(\spe{p}(\spe{g}))$ is a triangulation of the poset polyhedron \cite{aguiar-ardila}. Hence $\Sigma(\spe{p}(\spe{g}))$ is contractible for any partial order, and we see that $\Sigma(\spe{g})$ has trivial homology below top dimension for every acyclic mixed graph $\spe{g}$. In particular, $\spe{M}$ is Cohen-Macaulay.
\end{proof}

Now we discuss relative simplicial complexes.
A relative simplicial complex on a set $V$ consists of a pair $(\Sigma, \Gamma)$ of simplicial complexes such that every face of $\Gamma$ is a face of $\Delta$. A relative complex $(\Gamma, \Sigma)$ is \emph{relatively Cohen-Macaulay} if $\dim \widetilde{H}_i(\lk_{\Sigma}(\sigma), \lk_{\Gamma}(\sigma)) = 0$ for $i < \dim \lk_{\Sigma}(\sigma)$ for every $\sigma \in \Sigma$.

Now let $\spe{S}$ be a geometric Hopf submonoid. Given a $\spe{H}$-structure $\spe{h}$, we let $\Gamma_{\spe{S}}(\spe{h})$ be the subcomplex of $\Sigma(\spe{h})$ consisting of chains $\emptyset \subset S_1 \subseteq \cdots \subseteq S_k \subset N$ such that $h|_{S_i} / S_{i-1}$ is not a $\spe{S}$-structure for some $i \in [k+1]$, where we define $S_{k+1} = N$. This generalizes the coloring complex of a graph as introduced by Steingr\'imsson \cite{steingrimsson}. If we let $\spe{g}$ be the acyclic mixed graph on the left in Figure \ref{fig:graph}, and we let $\spe{S}$ be the geometric Hopf submonoid of edgeless graphs, then $\Gamma_{\spe{S}}(\spe{g})$ is depicted in Figure \ref{fig:graphsubcomplex}.

\begin{figure}
\begin{center}
\begin{tikzpicture}[z={(-.5,-.28)}]


\node[thick,  draw=black, fill = white, circle] (bc) at (-3,0) {bc};
\node[thick,  draw=black, fill = white, circle] (b) at (-1.5,-2) {b};
\node[thick,  draw=black, fill = white, circle] (abd) at (1.5,-2) {abd};
\node[thick,  draw=black, fill = white, circle] (ad) at (3,0) {ad};
\node[thick,  draw=black, fill = white, circle] (a) at (1.5,2) {a};
\node[thick,  draw=black, fill = white, circle] (abc) at (-1.5,2) {abc};
\draw[-, line width = 1mm, line cap = round] (bc) -- (abc) -- (a) -- (ad) -- (abd) -- (b) -- (bc);
\draw[-, line width = 1mm, line cap = round] (b) -- (abc);
\draw[-, line width = 1mm, line cap = round] (a) -- (abd);

\end{tikzpicture}
\end{center}
\caption{An example of a subcomplex.}
\label{fig:graphsubcomplex}
\end{figure}
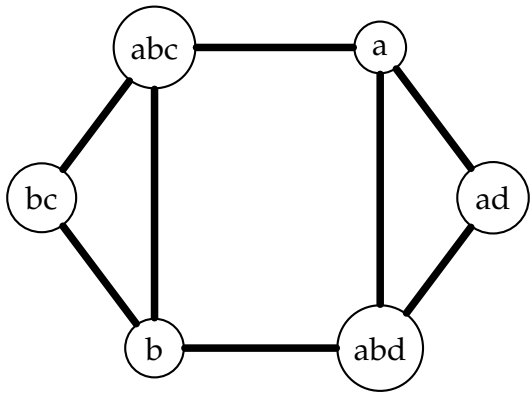

Our main reason for studying relative simplicial complexes is that the characteristic polynomial for $\spe{S}$ is essentially a Hilbert polynomial \cite{white-2}. This is also the reason why refer to $\spe{S}$ as a \emph{geometric} Hopf submonoid. That is, given an $\spe{H}$-structure $\spe{h}$, $\chi^{\spe{S}}_{\spe{H}}(\spe{h},k+1)$ is the Hilbert polynomial associated to the \emph{double cone} over the relative simplicial complex $(\Sigma(\spe{h}), \Gamma_{\spe{S}}(\spe{h}))$.

We say that $\spe{h}$ is \emph{relatively Cohen-Macaulay} (with respect to $\spe{S}$) if $(\Sigma(\spe{h}), \Gamma_{\spe{S}}(\spe{h}))$ is relatively Cohen-Macaulay. This implies that $\chi_{\spe{H}}^{\spe{S}}(\spe{h},k+1)$ is $h$-positive and $\Psi_{\spe{H}}^{\spe{S}}(\spe{h})$ is $F$-positive.

Now we apply Theorem \ref{thm:main} to the Hopf monoid of acyclic mixed graphs.
Let $\spe{S}$ be the Hopf submonoid of edgless graphs. Let $N$ be a finite set, and let $\spe{g}$ be an acyclic mixed graph. Then $\Gamma_{\spe{S}}(\spe{g})$ is always a connected pure simplicial complex of dimension $|N| - 1$. Since minors of acyclic mixed graphs are also acyclic mixed graphs, the condition of Theorem \ref{thm:main} holds.
\begin{theorem}
Let $\spe{S}$ be the geometric Hopf submonoid of edgeless graphs. Let $\spe{g}$ be an acyclic mixed graph. Then $\spe{g}$ is relatively Cohen-Macaulay with respect to $\spe{S}$. Moreover, $\bar{\chi}(\spe{g},k+1)$ is $h$-positive, and the strong chromatic symmetric function $\Psi_{\spe{M}}^{\spe{S}}(\spe{g})$ is $F$-positive.

\end{theorem}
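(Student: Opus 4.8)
The plan is to check the combinatorial hypothesis of Theorem~\ref{thm:main} for $\spe{H}=\spe{M}$ and $\spe{S}$ the geometric Hopf submonoid of edgeless graphs, and then to invoke the observation, noted right after the definition of relative Cohen-Macaulayness, that a relatively Cohen-Macaulay structure automatically has $h$-positive characteristic polynomial and $F$-positive characteristic quasisymmetric function. So the only thing to establish is the structural claim: for every $S\subset T\subseteq N$ with $|T\setminus S|\ge 2$, the complex $\Gamma_{\spe{S}}(\spe{g}|_T/S)$ is connected, pure, and of dimension $\dim\Sigma(\spe{g}|_T/S)-1$.

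First I would reduce to arbitrary acyclic mixed graphs. Whenever $\Delta_{S,\,T\setminus S}(\spe{g}|_T)\ne 0$, the minor $\spe{g}|_T/S$ is exactly the subgraph of $\spe{g}$ induced on $T\setminus S$, hence again an acyclic mixed graph, and an induced subgraph of an induced subgraph is an induced subgraph; so the class of minors of $\spe{g}$ is closed under taking further minors. It therefore suffices to prove: for every acyclic mixed graph $\spe{g}'$ on a set $M$ with $|M|\ge 2$, the complex $\Gamma_{\spe{S}}(\spe{g}')$ is connected, pure, of dimension $\dim\Sigma(\spe{g}')-1$. For this I use the description from the proof that $\spe{M}$ is Cohen-Macaulay: $\Sigma(\spe{g}')=\Sigma(\spe{p}(\spe{g}'))$, whose vertices are the nonempty proper order ideals of the poset $\spe{p}(\spe{g}')$, and which, being the canonical triangulation of the poset polyhedron, is pure of dimension $|M|-2$, with facets the maximal chains of order ideals --- equivalently, indexed by the linear extensions of $\spe{p}(\spe{g}')$.

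The heart of the matter is the analysis of $\Gamma_{\spe{S}}(\spe{g}')$. Its faces are the chains of nonempty proper order ideals $I_1\subsetneq\cdots\subsetneq I_k$ for which some ``layer'' $I_i\setminus I_{i-1}$ (where $I_0=\emptyset$, $I_{k+1}=M$) is not an independent set of $\spe{g}'$. Along a facet of $\Sigma(\spe{g}')$ every layer is a singleton, hence independent, so no facet of $\Sigma(\spe{g}')$ lies in $\Gamma_{\spe{S}}(\spe{g}')$ and $\dim\Gamma_{\spe{S}}(\spe{g}')\le|M|-3$. For purity I would assume $\spe{g}'$ has an edge --- if it is edgeless, $\Gamma_{\spe{S}}(\spe{g}')$ is the void complex and relative Cohen-Macaulayness of $\spe{g}'$ collapses to Cohen-Macaulayness of $\Sigma(\spe{g}')$, which is already known --- and treat $|M|=2$ by hand. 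Given a face with a non-independent layer $L$, the key step is to find inside $\spe{g}'|_L$ an edge $\{c,c'\}$ whose endpoints are either incomparable in $\spe{p}(\spe{g}')$ or form a cover relation of $\spe{p}(\spe{g}')$ restricted to $L$: such an edge always exists, because whenever an edge has an element of $L$ strictly between its endpoints one can replace it by a strictly ``shorter'' edge obtained from the first step of a directed path. One then refines the chain to a linear extension of $\spe{p}(\spe{g}')$ in which $c$ is immediately followed by $c'$ and removes the intervening ideal, producing a chain of $|M|-2$ ideals with exactly one non-singleton (hence non-independent) layer $\{c,c'\}$ --- a face of $\Gamma_{\spe{S}}(\spe{g}')$ of dimension $|M|-3$ containing the original face. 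Connectedness of $\Sigma(\spe{g}')$ for $|M|\ge 3$, and of $\Gamma_{\spe{S}}(\spe{g}')$, I would obtain by a similar splitting/merging argument on chains of order ideals.

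I expect the purity of $\Gamma_{\spe{S}}(\spe{g}')$ to be the main obstacle: it is the analogue, for acyclic mixed graphs and their poset of order ideals, of the classical fact that the coloring complex of a graph is pure of codimension one, and the delicate point is to push an arbitrary ``bad'' chain all the way up to the top dimension while keeping a non-independent layer, which forces the choice of a well-placed edge within that layer. Once the hypothesis of Theorem~\ref{thm:main} is in place, relative Cohen-Macaulayness of $\spe{g}$ is immediate, and with it the $h$-positivity of $\bar{\chi}(\spe{g},k+1)$ and the $F$-positivity of $\Psi_{\spe{M}}^{\spe{S}}(\spe{g})$.
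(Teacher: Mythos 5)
Your proposal follows exactly the paper's route: verify the hypothesis of Theorem~\ref{thm:main} by observing that every minor of an acyclic mixed graph is again an acyclic mixed graph and that $\Gamma_{\spe{S}}(\spe{g}')$ is always connected, pure, and of codimension one in $\Sigma(\spe{g}')$ (note the paper's stated dimension ``$|N|-1$'' is off; your count $\dim\Sigma(\spe{g}')-1=|M|-3$ is the one matching Theorem~\ref{thm:main} and Figure~\ref{fig:graphsubcomplex}). The paper merely asserts this structural claim and defers details to the full version, so your order-ideal/linear-extension argument for purity is a genuine filling-in of the same approach; the only piece that remains a gesture in your write-up, as in the paper's, is connectedness, which deserves care since it is precisely the condition that fails in the weak-chromatic setting.
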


\section{Another application to acyclic mixed graphs}

Another polynomial invariant associated to acyclic mixed graphs is the weak chromatic polynomial. The \emph{weak chromatic polynomial} $\chi(\spe{g}, k)$ counts the number of functions $f: V \to [k]$ subject to:
\begin{enumerate}
    \item if $(u,v)$ is a directed edge, then $f(u) \geq f(v)$,
    \item if $uv$ is an undirected edge, then $f(u) \neq f(v)$.
\end{enumerate}

The weak chromatic polynomial also comes from a geometric Hopf submonoid. Given a finite set $N$, let $\vec{\spe{D}}_N$ be the set of directed graphs. Then $\vec{\spe{D}}$ forms a geometric Hopf submonoid. However, it is not the case that every acyclic mixed graph is relatively Cohen-Macaulay with respect to $\vec{\spe{D}}$. The simplest example is given by the graph on the left in Figure \ref{fig:graph}. The reader can check that the $h$-vector for $\chi(\spe{g},k)$ is given by $(0,3,4,-1)$, which has negative entries.

Given an acyclic mixed graph $\spe{g}$, and two undirected edges $e$ and $f$, let $C$ be the smallest convex subset of $\spe{p}(\spe{g})$ containing the endpoints of $e$ and $f$, and let $I$ be the smallest ideal containing the endpoints of $e$. We call $e$ and $f$ \emph{crossing} if the $C \cap I$ contains one vertex of $f$ and not the other. We show in the extended version that if $\spe{g}$ has a pair of crossing edges, then $\spe{g}$ has a minor where $\Gamma_{\vec{\spe{D}}}(\spe{g}|_T/S)$ which is disconnected. 

If $\spe{g}$ has no pair of crossing edges then $\spe{g}$ is \emph{noncrossing}. When $\spe{g}$ is noncrossing, $\Gamma_{\vec{\spe{D}}}(\spe{g})$ is connected, pure and of the correct dimension. It is clear that being noncrossing is closed under minors, so the same fact is true of the $\Gamma$-complex for every minor. Hence, Theorem \ref{thm:main} applies.

\begin{theorem}
Let $\spe{S}$ be the geometric Hopf submonoid of directed graphs. Let $\spe{g}$ be an acyclic mixed graph. Then $\spe{g}$ is relatively Cohen-Macaulay with respect to $\spe{S}$ if and only if $\spe{g}$ is noncrossing. Moreover, $\chi(\spe{g},k+1)$ is $h$-positive, and the corresponding weak chromatic symmetric quasifunction is $F$-positive.

\end{theorem}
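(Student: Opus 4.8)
The plan is to invoke Theorem~\ref{thm:main}, so the whole argument reduces to verifying, for every minor $\spe{g}|_T/S$ with $|T \setminus S| \geq 2$, that $\Gamma_{\vec{\spe{D}}}(\spe{g}|_T/S)$ is a connected pure simplicial complex of dimension $\dim \Sigma(\spe{g}|_T/S) - 1$. First I would record that the class of noncrossing acyclic mixed graphs is closed under taking minors: restricting to $T$ and contracting $S$ only deletes vertices and edges and merges comparable vertices of $\spe{p}(\spe{g})$, so it cannot create a crossing pair of undirected edges where none existed (one should check that contraction of a directed relation does not turn two non-crossing undirected edges into crossing ones by examining the effect on the relevant convex set $C$ and ideal $I$ — this is the one place where some care with the definition of crossing is needed). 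Granting this, it suffices to prove the two directions of the stated equivalence for $\spe{g}$ itself, since the minor condition then follows automatically.

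For the ``if'' direction, assume $\spe{g}$ is noncrossing and show $\Gamma_{\vec{\spe{D}}}(\spe{g})$ is connected, pure, and of dimension $\dim \Sigma(\spe{g}) - 1 = |N| - 2$. Recall $\Sigma(\spe{g}) = \Sigma(\spe{p}(\spe{g}))$ is a triangulation of the poset polytope, hence pure of dimension $|N|-1$; a maximal face is a maximal chain $\emptyset \subset S_1 \subsetneq \cdots \subsetneq S_{|N|-1} \subset N$ with each $S_i$ a lower set of $\spe{p}(\spe{g})$. Such a chain lies in $\Gamma_{\vec{\spe{D}}}(\spe{g})$ precisely when some consecutive ``slice'' $\spe{g}|_{S_i}/S_{i-1}$ fails to be a directed graph, i.e. contains an undirected edge; so I must show (a) the faces of $\Gamma_{\vec{\spe{D}}}(\spe{g})$ of dimension $|N|-2$ are exactly the maximal chains witnessing an undirected edge in a slice — giving purity of the top dimension, together with an argument that every face extends to one of these; and (b) connectivity. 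For purity it is enough that every chain in $\Gamma_{\vec{\spe{D}}}(\spe{g})$ (that is, every chain of lower sets with an undirected edge appearing in some slice) can be refined to a maximal chain still having an undirected edge in some slice: refining only splits a slice into smaller slices, and one uses that an undirected edge $uv$ with $u,v$ incomparable in $\spe{p}(\spe{g})$ can always be kept inside a single minimal slice of a full refinement. For connectivity I would fix one undirected edge $e = uv$, show the subcomplex of chains whose slice containing $e$ is exactly the rank-jump across $\{u,v\}$ is connected (it is essentially the join of order complexes of the poset $\spe{p}(\spe{g})$ below and above, a cone-like situation), and then move between different undirected edges $e, f$: here is exactly where \emph{noncrossing} is used — when $e$ and $f$ do not cross, there is a common chain in which a single slice can be chosen to contain both $e$ and $f$, or a short path of elementary moves linking the $e$-region to the $f$-region, whereas a crossing pair obstructs this and the complex splits into the pieces described before Theorem~5 of the extended version.

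For the ``only if'' direction I would use the already-announced fact that a crossing pair $e, f$ produces a minor $\spe{g}|_T/S$ with $\Gamma_{\vec{\spe{D}}}(\spe{g}|_T/S)$ disconnected: take $T$ to be the convex set $C$ (with a point or two added to carry both edges of $f$) and $S$ an appropriate lower set, so that in the minor the two undirected edges are forced to appear in different slices of every relevant chain, and the complex breaks into at least two components with no common refinement — in particular it is not connected, so by Theorem~\ref{thm:main} $\spe{g}$ is not relatively Cohen-Macaulay with respect to $\vec{\spe{D}}$. Finally, the ``moreover'' clause is immediate: relative Cohen-Macaulayness of $(\Sigma(\spe{g}), \Gamma_{\vec{\spe{D}}}(\spe{g}))$ implies, as stated in the paragraph defining relatively Cohen-Macaulay structures, that $\chi(\spe{g}, k+1)$ is $h$-positive and the corresponding weak chromatic quasisymmetric function is $F$-positive.

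\textbf{Main obstacle.} I expect the genuine work to be the connectivity analysis in the ``if'' direction: translating the combinatorial ``noncrossing'' hypothesis into the statement that the various ``$e$ lives in a single slice'' subcomplexes can be chained together, and conversely identifying the precise minor that disconnects $\Gamma_{\vec{\spe{D}}}$ when a crossing pair is present. Purity and the dimension count are comparatively routine once one is careful that refining a chain of lower sets never destroys the witnessing undirected edge, and the closure-under-minors step requires only a short check on how $C$ and $I$ behave under restriction and contraction.
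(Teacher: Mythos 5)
Your proposal follows exactly the route the paper takes: reduce everything to the minor condition of Theorem~\ref{thm:main}, observe that noncrossing is closed under minors, verify that $\Gamma_{\vec{\spe{D}}}(\spe{g})$ is connected, pure, and of the right dimension when $\spe{g}$ is noncrossing, and use a crossing pair to manufacture a minor with disconnected $\Gamma$-complex for the converse. The paper (an extended abstract) defers precisely the steps you flag as the main obstacles — the connectivity analysis and the construction of the disconnecting minor — to its full version, so your sketch is at the same level of detail and in the same spirit as the paper's own argument.
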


\section{Double Posets}
As our second application, we consider the Hopf monoid of double posets. This is related to the Hopf algebra of double posets introduced by Malvenuto and Reutenauer \cite{dblposet}. A \emph{double poset} $\spe{p}$ is on $N$ is a triple $(N, <_1, <_2)$, where $<_1$ and $<_2$ are both strict partial orders. Grinberg \cite{grinberg} also has studied them, and showed that many examples of quasisymmetric functions in the literature are $\spe{p}$-partition enumerators for some double poset $\spe{p}$. 

A $\spe{p}$-partition is a function $f: \spe{p} \to \mathbb{N}$ subject to:
\begin{enumerate}
    \item if $x <_1 y$, then $f(x) \leq f(y)$.
    \item if $x <_1 y$ and $y <_2 x$, then $f(x) < f(y)$.
\end{enumerate}
The $\spe{p}$-partition enumerator is given by \[K(\spe{p}) = \sum_{f} \prod_{v \in N} x_{f(v)}\] and the corresponding order polynomial is $\Omega(\spe{p}, k)$ which counts the number of $\spe{p}$-partitions $f$ with $f(x) \leq k$ for all $x$.

We show how the $\spe{p}$-partition enumerator does arise from the theory of linearized Hopf monoids.
Let $\spe{D}_N$ be the set of double posets on $N$. Given $\spe{p} \in \spe{D}_M$ and $\spe{q} \in \spe{D}_N$, where $M$ and $N$ are disjoint sets, we define a double poset $\spe{p} \cdot \spe{q} = (M \sqcup N, <_1, <_2)$. For $x, y \in M \sqcup N$, we say $x <_1 y$ if one of the following holds:
\begin{enumerate}
    \item $x, y \in M$ and $x <_1 y$ in $\spe{p}$.
    \item $x,y \in N$ and $x <_2 y$ in $\spe{q}$.
\end{enumerate}
For $x, y \in M \sqcup N$, we say $x <_2 y$ if one of the following holds:
\begin{enumerate}
    \item $x, y \in M$ and $x <_1 y$ in $\spe{p}$.
    \item $x,y \in N$ and $x <_2 y$ in $\spe{q}$.
        \item $x \in M$ and $y \in N$.

\end{enumerate}
Hence we have a multiplication operation for $\spe{D}$. Now we define the comultiplication.
Let $\spe{p} \in \spe{D}_{M \sqcup N}$. We define $\spe{p}|_M$ as follows: for $x, y \in M$, we say $x <_i y$ in $\spe{p}|_M$ if and only if $x <_i y$ in $\spe{p}$. We define $\Delta_{M,N}(\spe{p}) = \spe{p}|_M \otimes \spe{p}|_N$ if $M$ is an order ideal of $<_1$.

\begin{proposition}
The species $\spe{D}$ of double posets is a Cohen-Macaulay Hopf monoid.
\end{proposition}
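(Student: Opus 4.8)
The plan is to observe that $\Sigma(\spe p)$ does not depend on the second partial order $<_2$, so that the problem collapses to the case of a single poset, which is already handled above. Concretely: for a double poset $\spe p = (N, <_1, <_2)$, the comultiplication of $\spe D$ gives $\Delta_{S, N\setminus S}(\spe p)\neq 0$ exactly when $S$ is an order ideal of $<_1$. Hence the vertex poset $V(\spe p)$ is the set of nonempty proper order ideals of $<_1$, ordered by inclusion, and $\Sigma(\spe p)$ is its order complex. Since the analogous condition for the Hopf monoid $\spe M$ of acyclic mixed graphs reads ``$S$ is an order ideal of $\spe p(\spe g)$'', we get $\Sigma(\spe p) = \Sigma(\spe g)$, where $\spe g \in \spe M_N$ is the acyclic mixed graph whose edges are the covering relations of $<_1$, each directed from the larger endpoint to the smaller, so that $\spe p(\spe g) = {<_1}$.

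With this identification the proof is essentially immediate. We have already shown that $\spe M$ is a Cohen-Macaulay Hopf monoid, which means that $\Sigma(\spe g)$ is a Cohen-Macaulay simplicial complex for every acyclic mixed graph $\spe g$; transporting along $\Sigma(\spe p) = \Sigma(\spe g)$, the complex $\Sigma(\spe p)$ is Cohen-Macaulay for every double poset $\spe p$, which is exactly the assertion that $\spe D$ is Cohen-Macaulay. (Equivalently, one may feed the homology vanishing $\widetilde H_i(\Sigma(\spe g)) = 0$ for $i$ below top dimension --- which the characterization theorem extracts from ``$\spe M$ is Cohen-Macaulay'' --- through the same characterization theorem for $\spe D$.) A self-contained variant avoids mentioning $\spe M$ altogether: $\Sigma(\spe p)$ is the order complex of the proper part of the distributive lattice of order ideals of $<_1$, distributive lattices are semimodular, and the proper part of a semimodular lattice is (CL-)shellable, hence Cohen-Macaulay.

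I do not expect a substantive obstacle --- all the topology is inherited from the poset case, via the fact (used to prove $\spe M$ is Cohen-Macaulay) that these order complexes triangulate the poset polyhedron. The two points requiring care are: first, verifying that $\spe D$ with the stated operations really is a linearized Hopf monoid, so that $\Sigma(\spe p)$ is defined and the cited results apply; and second, pinning down the identification $\Sigma(\spe p) = \Sigma(\spe g)$ precisely, i.e. confirming that $<_2$ is invisible to $\Sigma$ and that the comultiplication of $\spe D$ matches that of $\spe M$ on the relevant subsets.
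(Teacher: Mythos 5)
Your proposal is correct and follows essentially the same route as the paper, which simply observes that $\Sigma(\spe{p})$ depends only on $<_1$ (so $\Sigma(\spe{p}) = \Sigma(<_1)$) and then inherits Cohen-Macaulayness from the single-poset case already used for $\spe{M}$. Your explicit identification with $\Sigma(\spe{g})$ for the directed graph of covering relations, and the distributive-lattice shellability aside, are just more detailed versions of the same argument.
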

It is not hard to see that $\spe{D}$ is a linearized Hopf monoid. For a double poset $\spe{p}$, we see that $\Sigma(\spe{p}) = \Sigma(<_1)$. In particular, $\spe{D}$ is Cohen-Macaulay.

 We let $\spe{S}_N$ consist of double posets $\spe{p}$ on $N$ for which there does not exist a pair $(x,y)$ with $x <_1 y$ and $y <_2 x$. We call such pairs \emph{inversions}. If $y$ covers $x$ with respect to $<_1$, then we call it a \emph{descent}. Then $\spe{S}$ forms a geometric Hopf submonoid. Then $\Psi_{\spe{H}}^{\spe{S}}(\spe{p}) = K(\spe{p})$.

\begin{figure}
\begin{center}
\begin{tikzpicture}
\node[thick, draw=black,  fill = white, circle] (a) at (-1,1.5) {a};
\node[thick, draw=black,  fill = white, circle] (b) at (-1,0) {b};
\node[thick, draw=black,  fill = white, circle] (c) at (-1,-1.5) {c};

\draw[->, thick, line cap = round] (a) -- (b);
\draw[->, thick, line cap = round] (b) -- (c);

\node[thick, draw=black,  fill = white, circle] (a2) at (2,-1) {a};
\node[thick, draw=black,  fill = white, circle] (c2) at (3,0) {b};
\node[thick,  draw=black, fill = white, circle] (d2) at (2,1) {c};


\draw[->, thick, line cap = round] (d2) -- (a2);
\end{tikzpicture}
\end{center}
\caption{A double poset. The left diagram is $<_1$, while the right is $<_2.$}
\label{fig:dblposet}
\end{figure}
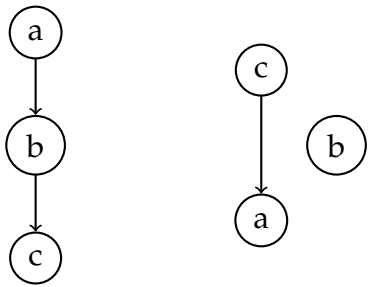

Of course, it is not the case that $K(\spe{p})$ is always $F$-positive, and hence $\spe{p}$ is not always Cohen-Macaulay with respect to $\spe{S}$. For example, $K(\spe{p}) = F_{2,1} + F_{1,2} - F_{1,1,1}$ for the double poset in Figure \ref{fig:dblposet}.
A double poset satisfies the \emph{inversion-to-descent condition} if whenever $(x,y)$ is and inversion, then there exists a descent $(w,z)$ with $x \leq_1 w$ and $z \leq_1 y$. It turns out that the inversion-to-descent condition is more general than the notion of tertispecial introduced by Grinberg.

We show in the full version that if $\spe{p}$ does not satisfy the inversion-to-descent condition, then it has a minor whose subcomplex $\Gamma(\spe{p}|_T/S)$ has dimension smaller than expected. It turns out that this is the only obstruction.

\begin{theorem}
Let $\spe{S}$ be the geometric Hopf submonoid of double posets $\spe{p}$ such that do not have inversions.
Let $\spe{p}$ be a double poset. Then $\spe{p}$ is relatively Cohen-Macaulay with respect to $\spe{S}$ if and only if $\spe{p}$ satisfies the inversion-to-descent condition. Moreover, $K(\spe{p})$ is $F$-positive and $\Omega(\spe{p}, k+1)$ is $h$-positive.

\end{theorem}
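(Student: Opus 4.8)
The plan is to derive everything from Theorem~\ref{thm:main} applied to $\spe{H}=\spe{D}$, using that $\spe{D}$ is Cohen--Macaulay. For a double poset $\spe{q}=(N',<_1,<_2)$ the minors $\spe{q}|_T/S$ are just the restrictions $\spe{q}|_V$ of both orders to subsets $V\subseteq N'$, and $\Sigma(\spe{q})=\Sigma(<_1)$ is the order complex of the poset of proper nonempty order ideals of $<_1$; this triangulates the order polytope, so it is contractible and $\dim\Sigma(\spe{q})=|N'|-2$. A face of $\Gamma_{\spe{S}}(\spe{q})$ is a chain of order ideals $\emptyset=S_0\subset S_1\subset\cdots\subset S_k\subset S_{k+1}=N'$ possessing a ``gap'' $G=S_i\setminus S_{i-1}$ for which $\spe{q}|_G$ has an inversion. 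Two elementary observations organize the argument: a two-element set $\{a,b\}$ with $a<_1 b$ occurs as a gap of some chain of ideals exactly when $b$ covers $a$ in $<_1$; hence a face of $\Gamma_{\spe{S}}(\spe{q})$ of dimension $\dim\Sigma(\spe{q})-1=|N'|-3$ is precisely a chain with one gap of size two --- necessarily a pair that is both a cover and an inversion, i.e.\ a descent --- and all other gaps singletons. In particular $\Gamma_{\spe{S}}(\spe{q})$ reaches dimension $\dim\Sigma(\spe{q})-1$ if and only if $\spe{q}$ has a descent.

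The heart of the proof is the equivalence: $\Gamma_{\spe{S}}(\spe{q})$ is connected and pure of dimension $\dim\Sigma(\spe{q})-1$ if and only if $\spe{q}$ satisfies the inversion-to-descent condition. Connectedness is the easy part and turns out to be automatic, so the content is purity at the right dimension. For the ``if'' direction, start from a face of $\Gamma_{\spe{S}}(\spe{q})$, pick a bad gap $G$ with an inversion $(x,y)$, apply inversion-to-descent to obtain a descent $(w,z)$ with $x\le_1 w$ and $z\le_1 y$, and verify that $w,z$ then lie in $G$ and that $z$ still covers $w$ inside $<_1|_G$; now refine the chain so that $\{w,z\}$ is one of the gaps and every other gap is a singleton --- making $\{w,z\}$ a gap uses the standard fact that a cover relation can be placed consecutively in some linear extension (take the down-set $\{v\in G: v<_1 z\}$, in which $w$ is maximal). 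This produces a facet of $\Gamma_{\spe{S}}(\spe{q})$ of dimension $\dim\Sigma(\spe{q})-1$ above the given face. For the ``only if'' direction I would argue by contraposition: if inversion-to-descent fails, pick an inversion $(x,y)$ with no witnessing descent and let $G$ be minimal among the gaps $H$ of $\spe{q}$ with $\spe{q}|_H$ having an inversion but no descent (the interval $\{v:x\le_1 v\le_1 y\}$ is such a gap, so the collection is nonempty). Using the interval construction one shows such a minimal $G$ is an interval whose unique minimum and maximum are the two terms of its only inversion and $|G|\ge 3$; then the chain with $G$ as its single non-singleton gap is a facet of $\Gamma_{\spe{S}}(\spe{q})$ of dimension $|N'|-|G|-1<\dim\Sigma(\spe{q})-1$, so $\Gamma_{\spe{S}}(\spe{q})$ fails to be pure at the right dimension.

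Feeding this into Theorem~\ref{thm:main} shows that $\spe{p}$ is relatively Cohen--Macaulay with respect to $\spe{S}$ if and only if every minor of $\spe{p}$ satisfies inversion-to-descent. Taking the minor to be $\spe{p}$ gives one implication at once; for the other one shows, via the gap reformulation, that the inversion-to-descent condition for $\spe{p}$ is inherited by the minors that occur (this is the point at which the precise, hereditary form of the condition is used). The ``moreover'' is then immediate: $\Psi^{\spe{S}}_{\spe{D}}(\spe{p})=K(\spe{p})$ and $\chi^{\spe{S}}_{\spe{D}}(\spe{p},k)=\Omega(\spe{p},k)$, since the $\spe{S}$-proper functions of $\spe{p}$ are exactly the $\spe{p}$-partitions, so by the general principle recorded in Section~\ref{sec:simplicial} being relatively Cohen--Macaulay forces $K(\spe{p})$ to be $F$-positive and $\Omega(\spe{p},k+1)$ to be $h$-positive.

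I expect the main obstacle to be the ``only if'' half of the purity lemma: turning a single bad inversion into an honest low-dimensional facet of $\Gamma_{\spe{S}}(\spe{q})$ requires a precise understanding of which subsets arise as gaps between covering order ideals, a careful choice of a minimal bad gap, and then checking that no refinement of the associated chain remains inside $\Gamma_{\spe{S}}(\spe{q})$. The two technical workhorses in both directions are the interval construction $G=\{v:x\le_1 v\le_1 y\}$ and the lemma that a cover relation can be made consecutive in a linear extension; a secondary delicate point is pinning down the behaviour of the inversion-to-descent condition under minors, which is exactly where care with the precise formulation is needed.
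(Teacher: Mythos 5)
Your proposal is correct and follows essentially the same route as the paper: invoke Theorem~\ref{thm:main} for the Cohen--Macaulay Hopf monoid $\spe{D}$, observe that minors are restrictions to $<_1$-convex sets (so inversion-to-descent is hereditary), show that a failure of inversion-to-descent yields a minimal bad gap $G$ with $|G|\ge 3$ whose associated chain is a facet of $\Gamma_{\spe{S}}$ of too-small dimension, and conversely that inversion-to-descent forces purity in codimension one via the cover-made-consecutive refinement. The one point you wave at (``connectedness is automatic'') is also left implicit in the paper's sketch, but it does deserve an actual argument linking facets arising from different descents.
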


\section{Elements of the proof}
\label{sec:proof}
The hard part of the proof of Theorem \ref{thm:main} is the converse direction. Suppose that $\spe{H}$ is a Cohen-Macaulay Hopf monoid, and let $\spe{S}$ be a geometric Hopf submonoid. Let $\spe{h}$ satisfy the conditions of the converse direction. In the full version, we prove the converse in stages. First, we show that it suffices to prove that $\dim \widetilde{H}_i(\Gamma_{\spe{S}}(\spe{h}|_T/S)) = 0$ for $i < \dim \Sigma(\spe{h}|_T/S) - 1$. The full result follows by using Mayer-Vietoris exact sequences to reduce the study of links to joins of complexes arising from minors.

We also claim the following result:
if $\Gamma_{\spe{S}}(\spe{h})$ is connected, pure, and has dimension $|N| - 1$, then $\dim \widetilde{H}_i(\Gamma_{\spe{S}}(\spe{h})) = 0$ for $i < |N|-1$. This would imply the previous claim. We prove this stronger claim by focusing on the types of relative simplicial complexes that could possibly arise from our construction. 

Let $M$ be a collection of subsets of $[n]$, including $\emptyset$ and $[n]$, ordered by inclusion. We let $\Int(M)$ be the set of intervals, ordered by inclusion. Let $\mathcal{F}$ be an order filter in $\Int(M)$. Then define $\Sigma(M)$ to be the order complex of $M \setminus \{\emptyset, [n] \}$, and $\Gamma(\mathcal{F}, M)$ to consist of those chains $S_1 \subseteq S_2 \cdots \subseteq S_k$ such that $[S_i, S_{i+1}] \in \mathcal{F}$ for some $i \in [k+1]$, where we let $S_{k+1} = [n]$. 

\begin{lemma}
Suppose $\Sigma(M)$ is Cohen-Macaulay. Suppose the minimal elements of $\mathcal{F}$ have length $2$. If $\Gamma(\mathcal{F}, M)$ is connected, then $\dim \widetilde{H}_i(\Gamma(\mathcal{F}, M)) = 0$ for $i < \dim(\Sigma(M)) -1$. 
\end{lemma}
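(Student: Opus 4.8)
Write $d=\dim\Sigma(M)$. The plan is to pass to the relative complex and concentrate its homology in top degree. Since $\Sigma(M)$ is Cohen--Macaulay we have $\widetilde H_i(\Sigma(M))=0$ for $i<d$, so the long exact sequence of the pair $(\Sigma(M),\Gamma(\mathcal{F},M))$ gives isomorphisms $\widetilde H_i(\Gamma(\mathcal{F},M))\cong H_{i+1}(\Sigma(M),\Gamma(\mathcal{F},M))$ for $1\le i<d-1$; the remaining case $i=0$ of the desired conclusion is exactly the hypothesis that $\Gamma(\mathcal{F},M)$ is connected (when $d\ge 2$ the same sequence identifies $\widetilde H_0(\Gamma(\mathcal{F},M))$ with $H_1(\Sigma(M),\Gamma(\mathcal{F},M))$, which is why connectedness is the natural hypothesis here). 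So it suffices to prove $H_j(\Sigma(M),\Gamma(\mathcal{F},M))=0$ for $2\le j\le d-1$.

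The faces of $\Sigma(M)$ that are \emph{not} in $\Gamma(\mathcal{F},M)$ are the chains $\emptyset=S_0\subsetneq S_1\subsetneq\cdots\subsetneq S_k\subsetneq S_{k+1}=[n]$ none of whose consecutive intervals $[S_{i-1},S_i]$ lies in $\mathcal{F}$. Here the hypothesis that the minimal elements of $\mathcal{F}$ have length $2$ does the essential work, in two ways. First, no cover relation of $M$ can lie in $\mathcal{F}$, so every saturated chain of $M$ is such a face; since $\Sigma(M)$ is Cohen--Macaulay it is pure, hence $M$ is ranked and these faces all have dimension $d$. Second, if $[S,T]\notin\mathcal{F}$ then, $\mathcal{F}$ being a filter and both $[S,U]$ and $[U,T]$ lying below $[S,T]$, neither $[S,U]$ nor $[U,T]$ is in $\mathcal{F}$ for any $U$ with $S\subsetneq U\subsetneq T$. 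Consequently the collection of faces of the relative complex is closed under refinement inside $\Sigma(M)$, and its maximal members are precisely the saturated chains, all of dimension $d$; so one expects all of its homology to sit in degree $d$.

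To make this precise, the plan is to build an acyclic discrete Morse matching on the face poset of the relative complex whose critical faces lie only in dimensions $0$, $1$, or $d$; Morse theory then forces $H_j(\Sigma(M),\Gamma(\mathcal{F},M))=0$ for $2\le j\le d-1$, which finishes the argument. Fixing a linear order on $M$, the matching would scan the consecutive intervals of a chain from the bottom and, at the first actionable spot, either insert the order-least element filling the first length-$\ge 2$ gap or delete the element that would have been inserted there. An alternative, matching the Mayer--Vietoris strategy the paper uses for the full theorem, is induction on $d$: writing $\Gamma(\mathcal{F},M)$ as the union of the star and the deletion of a vertex $v$, one has $\lk_{\Gamma(\mathcal{F},M)}(v)=\bigl(\Gamma(\mathcal{F}|_{[\emptyset,v]},[\emptyset,v])*\Sigma([v,[n]])\bigr)\cup\bigl(\Sigma([\emptyset,v])*\Gamma(\mathcal{F}|_{[v,[n]]},[v,[n]])\bigr)$; the factors $\Sigma([\emptyset,v])$ and $\Sigma([v,[n]])$ are Cohen--Macaulay (links of Cohen--Macaulay complexes are), the restricted filters still have minimal elements of length $2$, so the inductive hypothesis controls the homology of the interval $\Gamma$-complexes, and the K\"{u}nneth formula for joins together with two Mayer--Vietoris sequences then bounds the homology of the link, hence of $\Gamma(\mathcal{F},M)$.

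The hard part is exactly this last step. In the Morse-matching formulation the naive ``first gap'' rule fails to be an involution: inserting the canonical filler into the first gap can create a new, earlier actionable spot (a cover relation that becomes canonically removable only after the insertion), so the rule must interleave insertions and deletions, and the reversibility check splits into cases according to which of $[S_{i-1},S_i]$, $[S_{i-1},U]$ and $[U,S_i]$ lie in $\mathcal{F}$. In the Mayer--Vietoris formulation the analogous difficulty is to make sure enough connectivity is available on the interval subcomplexes $\Gamma(\mathcal{F}|_{[\emptyset,v]},[\emptyset,v])$ to apply the inductive hypothesis (or, when it is not, to check that the offending low-degree classes land above degree $d-2$ in the joins and so do no harm), together with the base case $d\le 1$, where the statement is vacuous.
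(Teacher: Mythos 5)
Your reduction is sound as far as it goes: the long exact sequence of the pair $(\Sigma(M),\Gamma(\mathcal{F},M))$ together with Cohen--Macaulayness of $\Sigma(M)$ correctly converts the claim into the vanishing of $H_j(\Sigma(M),\Gamma(\mathcal{F},M))$ for $2\le j\le d-1$, the observation that no cover relation lies in $\mathcal{F}$ (so every saturated chain is a face of the relative complex) is right, and so is the upward-closure of the non-$\Gamma$ faces under refinement. But the proof stops exactly where the theorem begins. The vanishing of the middle relative homology is never established: you offer two candidate mechanisms and, for each, you yourself identify an obstruction that you do not resolve. For the discrete Morse matching you concede that the ``first actionable gap'' rule is not an involution and that the repair ``splits into cases'' that you do not carry out --- but an acyclic matching with critical cells only in dimensions $0$, $1$, and $d$ \emph{is} the theorem, so leaving its construction as a plan leaves the lemma unproved. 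For the link-induction route, the inductive hypothesis requires \emph{connectedness} of the restricted complexes $\Gamma(\mathcal{F}|_{[\emptyset,v]},[\emptyset,v])$ and $\Gamma(\mathcal{F}|_{[v,[n]]},[v,[n]])$, and connectedness of $\Gamma(\mathcal{F},M)$ does not restrict to intervals; you flag this (``or, when it is not, to check that the offending low-degree classes \ldots do no harm'') without doing the check. Either gap alone is fatal to the argument as written.

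It is also worth noting that the paper's own proof is organized differently: it inducts on the number of length-$2$ intervals of $M$ \emph{not} in $\mathcal{F}$, with base case $\mathcal{F}$ containing all $2$-intervals, where $\Gamma(\mathcal{F},M)$ is the codimension-$1$ skeleton of $\Sigma(M)$ and hence Cohen--Macaulay; the inductive step produces an auxiliary pair $(\mathcal{F}',M')$ so that the union and intersection of $\Gamma(\mathcal{F},M)$ and $\Gamma(\mathcal{F}',M')$ are again complexes of the same form with strictly fewer missing $2$-intervals, and a single Mayer--Vietoris sequence closes the induction (connectedness of the intersection being extracted from the same sequence). That induction parameter sidesteps both of the difficulties your two sketches run into, since the base case is a skeleton rather than a low-dimensional complex and the auxiliary complexes are engineered to stay within the class. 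If you want to salvage your write-up, the most direct fix is to adopt that induction; otherwise you must actually exhibit the corrected Morse matching and verify acyclicity, which is the substantive content currently missing.
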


\begin{proof}[Proof Sketch]

The proof proceeds by induction on the number of intervals of length $2$ that are not in $\mathcal{F}$. In the base case, $\Gamma(\mathcal{F}, M)$ is the $(n-1)$-skeleton of $\Sigma(M)$, and hence is Cohen-Macaulay. 

Since $\mathcal{F}$ is non-empty, we have an interval of length two in $\mathcal{F}$, and an interval of length two that is not in $\mathcal{F}.$ The main idea is to find another pair $(\mathcal{F}', M')$ such that the intersection and union of $\Gamma(\mathcal{F}, M)$ and $\Gamma(\mathcal{F}', M')$ are also complexes of the form $\Gamma(\mathcal{F}'',M'')$ for some $\mathcal{F}''$ and $M''$. Then we use induction and a Mayer-Vietoris exact sequence for the pair $\Gamma(\mathcal{F}, M)$ and $\Gamma(\mathcal{F}', M')$ in order to compute the reduced homology of $\Gamma(\mathcal{F}, M)$. There are some technicalities involved to construct these things carefully, and to make sure everything is connected. \end{proof}

For an arbitrary Cohen-Macaulay Hopf monoid $\spe{H}$, and a Hopf submonoid $\spe{S}$, let $\spe{h}$ be an $\spe{H}$-structure on $[n]$. Then we let $M = \{S \subseteq [n]: \Delta_{S, [n] \setminus S}(\spe{h}) \neq 0 \}$. Then $\Sigma(M) = \Sigma(\spe{h})$. We let $\mathcal{F} = \{ [S,T]: \spe{h}|_T / S \not\in \spe{S}_{T \setminus S} \}$. Then $\Gamma(\mathcal{F}, M) = \Gamma_{\spe{S}}(\spe{h})$. If the latter is pure of codimension $1$, then the minimal elements of $\mathcal{F}$ consist only of intervals of length $2$, and the lemma applies. Hence we find that homology is concentrated in top dimension for $\Gamma_{\spe{S}}(\spe{h})$. So we see that if $\Gamma_{\spe{S}}(\spe{h})$ is connected, pure, and has codimension $1$, then its homology is concentrated in top dimension. 

\section{Future Work}
Obviously, there are many open questions:
\begin{enumerate}
    \item If $\Sigma(\spe{h})$ is shellable for all $\spe{h}$, and the conditions of Theorem \ref{thm:main} are met, then is $\Gamma_{\spe{S}}(\spe{h})$ shellable? Is the pair relatively shellable?
    \item Under what conditions does $\Gamma_{\spe{S}}(\spe{h})$ have a convex ear decomposition \cite{chari}?
    \item In all of our examples, $\Sigma(\spe{h})$ is contractible, but we actually know that $\Sigma(\spe{h})$ is shellable. Is it the case that $\Sigma(\spe{h})$ being contractible for all $\spe{h}$ implies that they are shellable?
\end{enumerate}
\newcommand{\etalchar}[1]{$^{#1}$}


\begin{thebibliography}{BBC{\etalchar{+}}15}

\bibitem[AA]{aguiar-ardila}
Marcelo Aguiar and Federico Ardila.
\newblock Hopf monoids and generalized permutahedra.

\bibitem[ABS06]{aguiar-bergeron-sottile}
Marcelo Aguiar, Nantel Bergeron, and Frank Sottile.
\newblock Combinatorial {H}opf algebras and generalized {D}ehn-{S}ommerville
  relations.
\newblock {\em Compos. Math.}, 142(1):1--30, 2006.

\bibitem[AM10]{aguiar-mahajan-1}
Marcelo Aguiar and Swapneel Mahajan.
\newblock {\em Monoidal functors, species and {H}opf algebras}, volume~29 of
  {\em CRM Monograph Series}.
\newblock American Mathematical Society, Providence, RI, 2010.
\newblock With forewords by Kenneth Brown and Stephen Chase and Andr{\'e}
  Joyal.

\bibitem[AM13]{aguiar-mahajan-2}
Marcelo Aguiar and Swapneel Mahajan.
\newblock Hopf monoids in the category of species.
\newblock {\em Contemporary Mathematics}, 585:17--124, 2013.

\bibitem[BBC{\etalchar{+}}15]{beck-et-al}
Matthias Beck, Daniel Blado, Joseph Crawford, Ta\"{\i}na Jean-Louis, and
  Michael Young.
\newblock On weak chromatic polynomials of mixed graphs.
\newblock {\em Graphs Combin.}, 31(1):91--98, 2015.

\bibitem[BDK12]{hypergraph}
Felix Breuer, Aaron Dall, and Martina Kubitzke.
\newblock Hypergraph coloring complexes.
\newblock {\em Discrete Math.}, 312(16):2407--2420, 2012.

\bibitem[Bre92]{brenti}
Francesco Brenti.
\newblock Expansions of chromatic polynomials and log-concavity.
\newblock {\em Trans. Amer. Math. Soc.}, 332(2):729--756, 1992.

\bibitem[Cha97]{chari}
Manoj~K. Chari.
\newblock Two decompositions in topological combinatorics with applications to
  matroid complexes.
\newblock {\em Trans. Amer. Math. Soc.}, 349(10):3925--3943, 1997.

\bibitem[Gri17]{grinberg}
Darij Grinberg.
\newblock Double posets and the antipode of {QS}ym.
\newblock {\em Electron. J. Combin.}, 24(2):Paper 2.22, 47, 2017.

\bibitem[Hul07]{hultman}
Axel Hultman.
\newblock Link complexes of subspace arrangements.
\newblock {\em European J. Combin.}, 28(3):781--790, 2007.

\bibitem[MR11]{dblposet}
Claudia Malvenuto and Christophe Reutenauer.
\newblock A self paired {H}opf algebra on double posets and a
  {L}ittlewood-{R}ichardson rule.
\newblock {\em J. Combin. Theory Ser. A}, 118(4):1322--1333, 2011.

\bibitem[Sta95]{stanley-coloring-1}
Richard~P. Stanley.
\newblock A symmetric function generalization of the chromatic polynomial of a
  graph.
\newblock {\em Adv. Math.}, 111(1):166--194, 1995.

\bibitem[Ste01]{steingrimsson}
Einar Steingr{\'i}msson.
\newblock The coloring ideal and coloring complex of a graph.
\newblock {\em J. Algebraic Combin.}, 14(1):73--84, 2001.

\bibitem[Whi16]{white-2}
Jacob White.
\newblock Quasisymmetric functions from combinatorial hopf monoids and erhart
  theory.
\newblock {\em DMTCS Proceedings}, BC(01):1215--1226, 2016.

\end{thebibliography}
\end{document}